\newcommand{\D}{\mathcal{D}}
\newcommand{\C}{\mathbb{C}}
\newcommand{\N}{\mathbb{N}}
\newcommand{\Z}{\mathbb{Z}}
\newcommand{\B}{\mathcal{B(H)}}
\newcommand{\h}{\mathcal{H}}
\newcommand{\ka}{\mathcal{K}}
\newtheorem{example}{Example}[section]
\newtheorem{theorem}[example]{Theorem}
\newtheorem{lemma}[example]{Lemma}
\newtheorem{proposition}[example]{Proposition}
\newtheorem{corollary}[example]{Corollary}
\newtheorem{remark}[example]{Remark}
\newtheorem{problem}[example]{Problem}
\numberwithin{equation}{section}
\begin{document}

\title[High order isometric liftings and dilations]{High order isometric liftings and dilations}

%\title[Operators with $m$-isometric liftings and dilations]{Operators with $m$-isometric liftings and dilations}

 \author[C. Badea]{C\u{a}t\u{a}lin Badea}
 \address[C.~Badea]{Univ. Lille, CNRS, UMR 8524 - Laboratoire Paul Painlev\'{e}, France}
\email{cbadea@univ-lille.fr}
%\urladdr{http://math.univ-lille1.fr/~badea/}

 \author[V. M\"{u}ller]{Vladimir M\"{u}ller}
  \address[V. M\"{u}ller]{Czech Academy of Sciences, Prague, Czech Republic}
  \email{muller@math.cas.cz}
  %\urladdr{users.math.cas.cz/~muller}

\author[L. Suciu]{Laurian Suciu}
 \address[L. Suciu]{``Lucian Blaga'' University of Sibiu, Romania}
 \email{laurians2002@yahoo.com}
\keywords{$m$-isometric lifting, $m$-isometric dilation, convex operator, Foguel-Hankel operator}
 \subjclass[2010]{47A05, 47A15, 47A20, 47A63.}
\thanks{The first author was supported in part by
 the project FRONT of the French
National Research Agency (grant ANR-17-CE40-0021) and by the Labex CEMPI (ANR-11-LABX-0007-01). The second author was supported by grant No. 20-31529X of GA CR and RVO:67985840 and also by Labex CEMPI during a visit to Lille. The third author was supported 
by a project financed by Lucian Blaga University of Sibiu and Hasso Plattner Foundation research Grants LBUS-IRG-2020-06. We would like to thank the referee for a careful reading of the manuscript and for useful suggestions.}
\dedicatory{
To the memory of Ciprian Foia\c {s} (1933-2020)
}

\begin{abstract}
We show that a Hilbert space bounded linear operator has an $m$-isometric lifting for some integer $m\ge 1$ if and only if the norms of its powers grow polynomially. In analogy with unitary dilations of contractions, we prove that such operators also have an invertible $m$-isometric dilation. We also study $2$-isometric liftings of convex operators and $3$-isometric liftings of Foguel-Hankel type operators.
\end{abstract}

\maketitle
\section{Introduction and preliminaries}
Denote by $\mathcal{B}(\ka)$ the algebra of all bounded linear operators acting on a Hilbert space $\ka$.
Let $\h$ be a closed subspace of a Hilbert space $\ka$. We denote by $P_{\h}\in \mathcal{B}(\ka)$ the orthogonal projection onto $\h$.
Let $T\in \mathcal{B}(\h)$ and $S\in \mathcal{B}(\ka)$. We say that $S$ is a \emph{lifting} of $T$ if $TP_{\h}=P_{\h}S$. Equivalently, in the decomposition $\ka=\h\oplus \h^\perp$, the operator $S$ has the matrix form
$$
S=\begin{pmatrix} T&0\cr *&*\end{pmatrix}.
$$
Clearly $S$ is a lifting of $T$ if and only if $S^*$ is an \emph{extension} of $T^*$, that is $S^*\h \subset \h$ and $S^*\mid \h = T^*$.

We say that $S$ is a (power) \emph{dilation} of $T$ if
$$
T^n=P_{\h}S^n \mid \h
$$
for all $n\ge 0$.

The existence of isometric liftings and unitary dilations for Hilbert space contractions are basic results in dilation theory (see for instance \cite{FF,SFBK}). For other dilation results, related to the topics studied in this paper, we refer the reader to \cite{Pet, BerPet, BadMul1, BadMul2} and the references therein.
%More general, a special sort of dilation for polynomially bounded operators was obtained in \cite{Pet}. 
In this paper, continuing earlier investigations in \cite{BS1,BS2,S1}, we study liftings and dilations which are $m$-isometric. Recall that $T\in \mathcal{B}(\h)$ is called $m$-\emph{isometric} for some $m\ge 1$ if it satisfies the relation
$$
\sum_{j=0}^m (-1)^j\begin{pmatrix}m\\j\end{pmatrix} T^{*j}T^j=0.
$$
Clearly $1$-isometries are just isometries in the classical meaning. We refer the reader to the trilogy \cite{AS1,AS2,AS3} for more information about $m$-isometries.

It is well known (see \cite[page 389]{AS1}) that the powers of an $m$-isometry $S$ can grow only polynomially:  there exists $K$ such that
$\|S^n\|^2\le Kn^{m-1}$ for all $n\in\N$. Therefore any operator $T$ which has an $m$-isometric lifting (or dilation) must satisfy the same estimate.

In the next section we show that any operator whose powers grow polynomially has an $m$-isometric lifting for some $m$. Moreover, it has an invertible $m'$-isometrical dilation for some odd number $m'$. 
In particular, any power bounded operator has a $3$-isometric lifting and an invertible $3$-isometric dilation.

In the last sections of the paper we indicate particular classes of operators for which one can prove stronger results. We show that all convex operators satisfying necessary growth conditions have $2$-isometric liftings, while Foguel-Hankel type operators possess $3$-isometric liftings.

\section{$m$-isometric liftings and dilations}
\subsection*{High order isometric liftings.} The next result shows that any operator whose powers grow polynomially has an $m$-isometric lifting for some $m$.
Recall that an operator $S\in \mathcal{B}(\ka)$ is called \emph{expansive} if $\|Sx\|\ge\|x\|$ for all $x\in \ka$. The operator $S$ is called \emph{analytic} if $\bigcap_{n=0}^\infty S^n(\ka)=\{0\}$.

\begin{theorem}\label{thm33}
Let $m\ge 0$ be an integer and let $T\in \B$ be an operator satisfying the condition
\begin{equation}\label{eq32}
\sup_{n\ge 1} n^{-m/2}\|T^n\|<\infty.
\end{equation}
Then $T$ has an $(m+3)$-isometric lifting.

Moreover, the $(m+3)$-isometric lifting can be chosen to be expansive and analytic.
\end{theorem}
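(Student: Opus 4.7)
The plan is to realize $S$ as an explicit weighted shift-with-$T$ operator on a direct sum of copies of $\h$. Take
$$\ka = \bigoplus_{n \ge 0} \h, \qquad \bigl\langle (x_n)_n, (y_n)_n\bigr\rangle_{\ka} := \sum_{n \ge 0} \langle E_n x_n, y_n\rangle_{\h},$$
where $E_0 = I$ and the weights $E_n \in \B$ ($n \ge 1$) are positive operators to be specified. Define
$$S(x_0, x_1, x_2, \ldots) := (Tx_0,\, x_0,\, x_1,\, x_2,\ldots).$$
Since $E_0 = I$, the copy of $\h$ sitting as the zeroth summand is isometrically embedded; $\h$ is then coinvariant under $S$ with $S^*|_\h = T^*$, so $S$ is automatically a lifting of $T$.

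Choosing the $E_n$ is the core step. Using that $S$ is $(m+3)$-isometric iff $n \mapsto \|S^n y\|^2$ is a polynomial of degree at most $m+2$ for every $y \in \ka$, and the direct computation
$$\|S^n(x_0, x_1, \ldots)\|^2_{\ka} = \sum_{k=0}^n \langle E_k T^{n-k}x_0, T^{n-k}x_0\rangle + \sum_{k \ge 1}\langle E_{n+k} x_k, x_k\rangle,$$
the task reduces to picking $E_n$ so that both sums are polynomial in $n$ of degree $\le m+2$, with the monotonicity $E_{n+1} \ge E_n$ (plus $E_1 \ge I - T^*T$) supplying expansivity. A natural Ansatz is $E_n = \sum_{j=0}^{m+2} c_{n,j}\, T^{*j}T^j$ with scalars $c_{n,j}$ polynomial in $n$ of degree at most $m+2$, tuned so that the identity collapses at each power $T^{*j}T^j$ separately. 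Analyticity then follows from a short extra check: any $z \in \bigcap_n S^n\ka$ satisfies $z_k = T^{n-k}z_n$ for all $n \ge k$, and convergence of $\sum_n \langle E_n z_n, z_n\rangle$ together with the polynomial lower bound on $E_n$ forces $\|z_n\|$ to decay faster than $n^{-(m+2)/2}$; combining this with $\|T^{n-k}\| = O(n^{m/2})$ gives $z_k = 0$.

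The main obstacle is exhibiting concrete scalars $c_{n,j}$ that make the first sum literally polynomial in $n$, not merely polynomially bounded, despite the fact that $n \mapsto \|T^n x\|^2$ is in general not polynomial. This is where the hypothesis $\|T^n\|^2 \le C n^m$ enters essentially: the three-unit gap between the growth exponent $m$ and the target isometric order $m+3$ provides precisely the slack needed to absorb the non-polynomial tail of $T^{*(n-k)}T^{n-k}$ into a fixed finite polynomial combination of the $T^{*j}T^j$, and at the same time keeps the weights $E_n$ uniformly positive with polynomial norm bounds, so that $\ka$ is a genuine Hilbert space and $S$ is bounded with the required $(m+3)$-isometric, expansive, and analytic properties.
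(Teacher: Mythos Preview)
Your construction has a genuine gap at exactly the point you flag as the ``main obstacle,'' and the slack argument you sketch does not close it. With $E_0=I$, the vector $h\oplus 0\oplus 0\oplus\cdots$ lies in $\ka$ and
\[
\|S^n(h,0,0,\ldots)\|_{\ka}^2=\sum_{k=0}^n\langle E_kT^{n-k}h,T^{n-k}h\rangle,
\]
so the $k=0$ summand is $\|T^nh\|^2$. For $S$ to be an $(m+3)$-isometry this quantity must be an \emph{exact} polynomial in $n$ of degree at most $m+2$, not merely polynomially bounded. Take $m=0$ and $T=\lambda I$ with $0<|\lambda|<1$; then $B_\ell:=T^{*\ell}T^\ell=|\lambda|^{2\ell}I$, and under any Ansatz $E_k=\sum_{j=0}^{m+2}c_{k,j}T^{*j}T^j$ one computes
\[
\sum_{k=0}^n\sum_{j}c_{k,j}\,|\lambda|^{2(n-k+j)}
=|\lambda|^{2n}+\sum_{k=1}^n|\lambda|^{2(n-k)}d_k,\qquad d_k:=\sum_j c_{k,j}|\lambda|^{2j}.
\]
If the $c_{k,j}$ (hence the $d_k$) are polynomials in $k$, an elementary recursion $P(n+1)=|\lambda|^2P(n)+d_{n+1}$ shows that the exponential term $|\lambda|^{2n}$ survives with a nonzero coefficient for every $|\lambda|\in(0,1)$; the expression is never a polynomial in $n$. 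More generally, if $E_r=\sum_i r^iA_i$ is any operator-coefficient polynomial in $r$, the first sum is polynomial in $n$ only if each $\ell\mapsto\langle A_iT^\ell h,T^\ell h\rangle$ is itself polynomial in $\ell$ --- a hypothesis far stronger than $\|T^n\|^2=O(n^m)$. The three-unit gap does not convert an $O(n^m)$ bound into the needed algebraic identity.

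The paper avoids this by \emph{not} embedding $\h$ as the zeroth summand. It first builds an $(m+3)$-isometric weighted forward shift $S$ on $\ell^2_+(\h)$ with scalar weights $\alpha_s=\bigl((2Ks+1)/(2K(s-1)+1)\bigr)^{(m+2)/2}$, chosen so that $b_s:=(\alpha_1\cdots\alpha_s)^{-2}=(2Ks+1)^{-(m+2)}$; the growth hypothesis then gives $\sum_{s\ge 1}b_s\|T^{*s}\|^2<1$. At this point M\"uller's model theorem for weighted shifts (\cite{M}, Theorem~2.2) supplies a unitary embedding of $\h$ into $\ell^2_+(\h)$ intertwining $T^*$ with the restriction of $S^*$. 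The three-unit gap is exactly what makes the series $\sum_s b_s\|T^s\|^2$ converge --- a summability condition, not an algebraic cancellation --- and the embedding is implicit in the model theorem rather than the naive coordinate inclusion you use.
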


\begin{proof}
Suppose first that the Hilbert space $\h$ is separable.

Let $K \ge \max \{1, n^{-m/2}\|T^n\| \, : n\ge 1 \}$. Then
$$
\|T^n\|^2\le K^2n^m, \quad n\ge 1.
$$
For every integer $s \ge 1$ we set
$$
\alpha_s=\left(\frac{2Ks+1}{2K(s-1)+1}\right)^{(m+2)/2}.
$$
Clearly $\alpha_1 \ge \alpha _2 \ge ... \ge 1.$

Let $\ell_+^2(\h)=\bigoplus_{j=0}^{\infty} \h_j$, where $\h_j=\h$ for $j \ge 0$, and let $S$ be the weighted forward shift of multiplicity $\dim \h$ with the weights $\alpha_s$, i.e., $S$ is defined by
$$
S(h_0,h_1,...)=(0,\alpha_1h_0,\alpha_2 h_1,...)
$$
for all sequences $(h_0,h_1,...) \in \ell_+^2(\h)$. Then
$$
\|S^n(h_0,0,...)\|^2=\|(0,0,...,(2Kn+1)^{(m+2)/2}h_0,0,...)\|^2=(2Kn+1)^{m+2}.
$$
Moreover, it is easy to see that $S$ is an $(m+3)$-isometry; see \cite{AL,AR,Richter} for more information about $m$-isometric weighted shifts.

Let $S^*$ be the adjoint of $S$, i.e., $S^*$ is the weighted backward shift defined by
$$
S^*(h_0,h_1,h_2,...)=(\alpha_1h_1,\alpha_2h_2,...).
$$
We prove now that $S^*$ is (unitarily equivalent to) an extension of $T^*$. Indeed, for $s\ge 1$, let 
$$b_s=(\alpha_1\cdot \cdot \cdot \alpha_s)^{-2}=(2Ks+1)^{-m-2}.$$ 
Using \eqref{eq32}, we get
\begin{eqnarray*}
\sum_{s=1}^{\infty} b_s \|T^{*s}\|^2&=& \sum_{s=1}^{\infty} b_s \|T^s\|^2\le K^2 \sum_{s=1}^{\infty} s^m(2Ks+1)^{-m-2}\\
&\le & K^{-m} 2^{-m-2}\sum_{s=1}^{\infty} s^{-2}\le \frac{\pi^2}{24} <1.
\end{eqnarray*}
Thus, by \cite[Theorem 2.2]{M}, $T^*$ is unitarily equivalent to a restriction of $S^*$ to an invariant subspace for $S^*$ ($\h$ being separable). Hence $S$ is an $(m+3)$-isometric lifting of $T$
 and it is clear that $S$ is analytic and expansive (because $\alpha_s\ge 1$ for all $s\ge 1$). 

If $\h$ is non-separable and $T\in \B$ satisfies the condition \eqref{eq32}, then $\h =\bigoplus_{\gamma}\h_{\gamma}$, where $\h_{\gamma}$ are separable subspaces reducing $T$. So each restriction $T|_{\h_{\gamma}}$ has an %analytic and expansive 
$(m+3)$-isometric lifting $S_{\gamma}$ on a space $\ka_{\gamma}\supset \h_{\gamma}$. Moreover, one can take for $S_{\gamma}$ the same operator on $\ka_{\gamma}=\ell_+^2(\h_{\gamma})$.
%, such that the Cauchy dual $S_{\gamma}'$ is also analytic. 
Then $S=\bigoplus_{\gamma} S_{\gamma}$ is an $(m+3)$-isometric lifting of $T$,
which is analytic and expansive.

The proof is now complete.
\end{proof}

\begin{remark}\label{re34}
\rm
In general the integer $m'=m+3$ given by the previous theorem for an operator $T$ satisfying \eqref{eq32} is not optimal, that is, sometimes it is possible for $T$ to have an $m''$-isometric lifting with $m''<m'$. Some particular cases will be discussed in the following two sections.
\end{remark}
\begin{remark}\label{re34bis}
\rm
We have the following implications:
$$
T \hbox{ has }m\hbox{-isometric lifting }\Longrightarrow \sup_n\frac{\|T^n\|^2}{n^{m-1}}<\infty \Longrightarrow
T \hbox{ has }{(m+2)}\hbox{-isometric lifting}.
$$
\end{remark}

We can therefore formulate the following corollary.

\begin{corollary}\label{co35}
For $T\in \B$ the following statements are equivalent:
\begin{itemize}
\item[(i)] $T$ has an $m$-isometric lifting for some integer $m\ge 3$;
\item[(ii)] $\{T^n\}$ satisfies a growth condition
$$
\sup_{n\ge 1}  \frac{\|T^n\|^2}{n^p}<\infty
$$
for some integer $p\ge 0$. 
\end{itemize}
\end{corollary}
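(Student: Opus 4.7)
The plan is to derive the corollary as a direct repackaging of Theorem \ref{thm33} together with the polynomial growth estimate for $m$-isometries recalled just before its statement. Both implications will be immediate once the integers are matched correctly; this is really the content of Remark \ref{re34bis} stated as an equivalence.

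For (i) $\Rightarrow$ (ii), I would start with an $m$-isometric lifting $S \in \mathcal{B}(\ka)$ of $T$, where $m \ge 3$. The defining relation $TP_{\h} = P_{\h}S$ iterates to $T^n P_{\h} = P_{\h} S^n$ for all $n \ge 0$, so for every $x \in \h$
$$
\|T^n x\| = \|P_{\h} S^n x\| \le \|S^n x\| \le \|S^n\|\, \|x\|,
$$
whence $\|T^n\| \le \|S^n\|$. Since $S$ is $m$-isometric, the quoted bound from \cite{AS1} gives $\|S^n\|^2 \le K n^{m-1}$ for some constant $K$ and all $n \ge 1$. Thus (ii) holds with $p = m - 1$, which is a nonnegative integer because $m \ge 3$.

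For (ii) $\Rightarrow$ (i), I would observe that the hypothesis $\sup_{n \ge 1} \|T^n\|^2/n^p < \infty$ is exactly condition \eqref{eq32} of Theorem \ref{thm33} with $m$ replaced by $p$. Applying that theorem directly produces a $(p+3)$-isometric lifting of $T$, and $p + 3 \ge 3$, so (i) holds with this particular choice of integer.

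There is no real obstacle here: both directions are reduced to results already established in this section. The only sanity check is that the integers line up, i.e.\ that $p = m-1$ is indeed $\ge 0$ on one side (guaranteed by $m \ge 3$) and that the resulting order $p+3$ on the other side is $\ge 3$ (automatic from $p \ge 0$). Accordingly, the proof will consist of two short paragraphs quoting Theorem \ref{thm33} and the $m$-isometric growth bound.
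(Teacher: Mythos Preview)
Your proposal is correct and follows exactly the route the paper takes: the corollary is stated without proof as an immediate consequence of Remark~\ref{re34bis}, which in turn is precisely the combination of the $m$-isometric growth bound from \cite{AS1} for (i)$\Rightarrow$(ii) and Theorem~\ref{thm33} for (ii)$\Rightarrow$(i) that you spell out. The integer bookkeeping you check (that $p=m-1\ge 0$ and $p+3\ge 3$) is the only thing left implicit in the paper.
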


The case $p=0$ in the last condition means that the operator $T$ is power bounded. Thus we obtain by Theorem \ref{thm33} the following consequence.

\begin{corollary}\label{co36}
Every power bounded operator has a 3-isometric lifting, which can be chosen to be expansive and analytic.
\end{corollary}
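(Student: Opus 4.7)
The plan is to derive this as an immediate specialization of Theorem \ref{thm33}. A power bounded operator $T$ by definition satisfies $\sup_{n\ge 1}\|T^n\|<\infty$, which is exactly the growth condition \eqref{eq32} in the case $m=0$, since $n^{-0/2}=1$. So the hypotheses of Theorem \ref{thm33} are met with $m=0$.

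Applying Theorem \ref{thm33} with this choice of $m$, one obtains an $(m+3)$-isometric lifting $S$ of $T$, i.e., a $3$-isometric lifting. The ``Moreover'' part of the same theorem guarantees that $S$ can be chosen to be expansive and analytic, which gives the full conclusion.

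There is no obstacle to overcome here beyond unwinding definitions; the content of the corollary is entirely contained in the $m=0$ case of Theorem \ref{thm33}. The only thing to make explicit is that ``power bounded'' is precisely the $m=0$ instance of the polynomial growth hypothesis \eqref{eq32}, after which the theorem does all the work (constructing the weighted shift with weights $\alpha_s=\bigl((2Ks+1)/(2K(s-1)+1)\bigr)^{1}$ and invoking \cite[Theorem 2.2]{M} to realize $T^*$ as a restriction of the backward shift $S^*$).
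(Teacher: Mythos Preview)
Your proposal is correct and matches the paper's approach exactly: the corollary is obtained by applying Theorem~\ref{thm33} with $m=0$, since power boundedness is precisely condition~\eqref{eq32} in that case, and the expansive/analytic conclusion comes from the ``Moreover'' clause.
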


\subsection*{Extremal operators.} The following theorem concerns extremal operators in estimates of functions of power bounded operators. We refer to \cite{Pel} for several estimates of functions of power bounded operators on Hilbert spaces, in particular in terms of Besov-type norms, and to \cite{KM} for an answer to an open problem raised in \cite{Pel}.

In order to state the next result we need to introduce some notation. For a fixed $K>1$ and every integer $s \ge 1$ we set
$$
\alpha_s(K) =\left(\frac{2Ks+1}{2K(s-1)+1}\right).
$$
Clearly $\alpha_1(K) \ge \alpha _2(K) \ge ... \ge 1.$  Let $S_K$ be the weighted forward shift on $\ell_+^2 = \ell^2(\N,\C)$ with the weights $\alpha_s(K)$, i.e., $S_K$ is defined by
$$
S_K(z_0,z_1,...)=(0,\alpha_1(K)z_0,\alpha_2(K)z_1,...)
$$
for all sequences $(z_0,z_1,...) \in \ell_+^2$. 
%Let $\ell_+^2(\h)=\bigoplus_{j=0}^{\infty} \h_j$, where $\h_j=\h$ for $j \ge 0$, and let $S_K$ be the weighted forward shift of multiplicity $\dim \h$ with the weights $\alpha_s(K)$, i.e., $S_K$ is defined by
%$$
%S_K(h_0,h_1,...)=(0,\alpha_1(K)h_0,\alpha_2(K)h_1,...)
%$$
%for all sequences $(h_0,h_1,...) \in \ell_+^2(\h)$. 

\begin{theorem}\label{propo:vN}
Let $K > 1$. Suppose that $T\in \mathcal{B}(\h)$ is a power bounded operator such that $\|T^n\|\le K$ for every $n\ge 0$. Then
\begin{equation}\label{eq:vN}
\|p(T)\| \le \|p(S_K)\|
\end{equation}
for every polynomial $p$ with complex coefficients. 

However, for any fixed $K>1$, there is no power bounded weighted forward shift $E$ such that $\|f(A)\| \le \|f(E)\|$ holds true for every power bounded operator $A$ with $\sup_{n\ge 0}\|A^n\|\le K$ and every polynomial $f$. 
\end{theorem}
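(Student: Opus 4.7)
The plan is to handle the two assertions separately, each by reduction to a canonical model operator.

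For the inequality~\eqref{eq:vN}, the natural approach is to apply Theorem~\ref{thm33} with $m=0$. The hypothesis $\|T^n\|\le K$ coincides with~\eqref{eq32} in that case, and the weights $\alpha_s$ used in building the lifting degenerate precisely to the scalar weights $\alpha_s(K)$ that define $S_K$ (since $(m+2)/2=1$). Consequently the $3$-isometric lifting produced by Theorem~\ref{thm33} is a weighted forward shift $S$ on $\ell_+^2(\h)$ unitarily equivalent to $S_K\otimes I_\h$. From the matrix form of a lifting one gets $p(T)=P_\h p(S)|_\h$ for every polynomial $p$, so
$$
\|p(T)\|\le\|p(S)\|=\|p(S_K\otimes I_\h)\|=\|p(S_K)\|,
$$
as claimed.

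For the negative statement, I would argue by contradiction. Suppose such a universal power bounded weighted forward shift $E$ on $\ell_+^2$ exists, with positive weights $\gamma_j$, partial products $d_n:=\gamma_1\cdots\gamma_n$ ($d_0:=1$), and bound $M:=\sup_n\|E^n\|$. The key auxiliary step is to show $E$ itself is polynomially bounded, with constant at most $M$. To this end, introduce the non-increasing envelope $\tilde d_n:=\sup_{m\ge n}d_m$; power boundedness gives $d_n\le\tilde d_n\le M d_n$. The weighted forward shift $\tilde E$ on $\ell_+^2$ with weights $\tilde d_n/\tilde d_{n-1}\le 1$ is then a contraction, and the diagonal operator $V:=\mathrm{diag}(\tilde d_n/d_n)$ implements a similarity $V^{-1}\tilde E V=E$ with $\|V^{-1}\|\le 1$ and $\|V\|\le M$. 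Applying von Neumann's inequality to $\tilde E$ yields
$$
\|p(E)\|=\|V^{-1}p(\tilde E)V\|\le M\|p\|_\infty
$$
for every polynomial $p$.

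Combining this with the assumed universal property of $E$ forces $\|p(A)\|\le M\|p\|_\infty$ uniformly over all power bounded $A$ with $\sup_n\|A^n\|\le K$ and all polynomials $p$. To reach a contradiction, I would exhibit a Foguel--Hankel operator
$$
T_\phi=\begin{pmatrix} U & \Gamma_\phi \\ 0 & U^* \end{pmatrix},
$$
where $U$ is the unilateral shift on $\ell_+^2$ and $\Gamma_\phi$ is a Hankel operator, that belongs to the class in question and is \emph{not} polynomially bounded. Starting from a symbol $\phi$ for which $T_\phi$ is power bounded but not polynomially bounded, I would rescale to $T_{\lambda\phi}$ with $\lambda>0$ small enough that $\sup_n\|T_{\lambda\phi}^n\|\le K$. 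Since the off-diagonal blocks of both $T_{\lambda\phi}^n$ and $p(T_{\lambda\phi})$ are exactly $\lambda$ times those of $T_\phi^n$ and $p(T_\phi)$ respectively, this scaling preserves the unboundedness of $\|p(T_{\lambda\phi})\|/\|p\|_\infty$ while making the power norm bound arbitrarily close to $1$. The main obstacle is precisely justifying this last point, that is, invoking a Hankel symbol $\phi$ making $T_\phi$ power bounded but not polynomially bounded, and controlling the rescaling; the similarity-to-a-contraction step for $E$ is elementary once the envelope $\tilde d_n$ is introduced.
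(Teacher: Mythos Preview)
Your proposal is correct and follows the same overall strategy as the paper: the first part is handled identically via Theorem~\ref{thm33} with $m=0$, and the second part proceeds by showing that a power bounded weighted shift is polynomially bounded (via similarity to a contraction and von Neumann's inequality) and then contradicting this with a scaled Foguel-type operator.

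The execution differs in two places worth noting. For the similarity step, the paper simply cites \cite[page~55]{Shields}, whereas you supply an explicit self-contained argument via the non-increasing envelope $\tilde d_n=\sup_{m\ge n}d_m$; this is a clean proof of exactly the Shields result and makes your version more self-contained. For the contradiction, the paper works with the concrete operator $F=\begin{pmatrix} S^*&\frac{1}{N}X\\ 0&S\end{pmatrix}$ with $X=\sum_k E_{3^k,3^k}$ taken from \cite{Pau}, using \cite[Corollary~2.5]{Gar} to bound $\sup_n\|F^n\|$ below $K$; your rescaling $T_{\lambda\phi}$ of a generic power bounded, non-polynomially bounded Foguel--Hankel operator is the same idea in abstract form (the paper's $\frac{1}{N}$ plays the role of your $\lambda$), but you still need to name a specific $\phi$, and the one from \cite{Pau} is the natural choice. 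One small slip: your matrix for $T_\phi$ has $U$ and $U^*$ interchanged relative to the Hankel intertwining relation; the usual form is $\begin{pmatrix} S^*&\Gamma_\phi\\ 0&S\end{pmatrix}$.
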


\begin{proof}
The first part of the proposition follows from the construction of Theorem~\ref{thm33} for $m=0$ and standard spectral theory. Indeed, the operator $S_K\otimes I_{\h}$ (the weighted forward shift of multiplicity $\dim \h$ with the weights $\alpha_s(K)$) is a dilation of $T$. 

Let $K>1$. Suppose that there exists a power bounded weighted forward shift $E$ such that $\|f(A)\| \le \|f(E)\|$ holds true for every power bounded operator $A$ with $\sup_{n\ge 0}\|A^n\|\le K$ and every polynomial $f$. It is known that a power bounded weighted forward shift is similar to a contraction (see \cite[page 55]{Shields}). Using the von Neumann inequality for contractions (see \cite[page 31]{SFBK}), we obtain that every power bounded operator $A$ with $\sup_{n\ge 0}\|A^n\|\le K$ is \emph{polynomially bounded}. This means that there is a constant $K'\ge 1$ such that $\|f(A)\| \le K'\|f\|_{\infty}$ for every polynomial $f$. That this is a contradiction can be proved as a variation of the classical construction of Foguel~\cite{Fog}. We follow the exposition from \cite{Pau}. Let $N\in \N$ be such that 
$$\frac{\frac{1}{N}+\sqrt{4+\frac{1}{N}}}{2} < K.$$
We consider the following Foguel type operator 
$$
F=
\begin{pmatrix}
S^{*} & \frac{1}{N}X\\
0 & S
\end{pmatrix}
$$
acting on $\ell_+^2\oplus \ell_+^2$, where $S$ is the forward shift of $\ell_+^2:= \ell_+^2(\C)$ and $X = \sum_{k=1}^{\infty}E_{3^k,3^k}$. Here $E_{i,j}$ denotes the standard matrix units, that is $E_{i,j}$ is $1$ in the $(i,j)$th entry and $0$ elsewhere. 
%This operator (for $N=1$) has been considered by Foguel in \cite{Fog}. 
The powers of $F$ are of the following form 
$$
F^n=
\begin{pmatrix}
S^{*n} & \frac{1}{N}X_n\\
0 & S
\end{pmatrix}.
$$
It follows from the proof of \cite[Theorem 10.7]{Pau} that $\|X_n\| \le 1$. Using \cite[Corollary 2.5]{Gar} we obtain 
$$ \sup_{n\ge 0}\|F^n\| \le \frac{\frac{1}{N}+\sqrt{4+\frac{1}{N}}}{2}.$$
Thus $\sup_{n\ge 0}\|F^n\| < K$. The same proof as that of \cite[Theorem 10.9]{Pau} shows that $F$ is not polynomially bounded. We refer to \cite{Pau} for more details. Thus there is no power bounded weighted forward shift which is an extremal operator.
\end{proof}
Theorem \ref{propo:vN} should be compared to the von Neumann inequality for contractions, which says (in an equivalent form) that $\|p(C)\| \le \|p(S_+)\|$ for every contraction $C \in \mathcal{B}(\h)$. Here the extremal operator $S_+$ can be taken as the forward shift on $\ell_+^2$. 

In connection with Theorem \ref{propo:vN} and \cite{Pel}, one can ask whether the norms
$$ \|p\|_K := \sup \{\|p(T)\| : T \textrm{ is polynomially bounded, } \|T^n\| \le K (n\ge 0) \}$$
on the space of polynomials are equivalent for different values of constants $K > 1$. 
%A connected question has been answered in negative in [KM]
%of multiplicity $\dim\h$ . 

\subsection*{Invertible $m$-isometric dilations.} It is known (\cite[Proposition 1.23]{AS1}) that if $T$ is an invertible $m$-isometry and $m$ is even, then $T$ is an $(m-1)$-isometry. Suppose that $m+3$ is odd. The $(m+3)$-isometric operator $S$ constructed in Theorem \ref{thm33} has an invertible $(m+3)$-isometric extension. Indeed, assuming that 
$$
\|T^n\|^2\le K^2n^m, \quad n\ge 1,
$$
for fixed $m$ and $K$, set $w_n=(2Kn+1)^m$
for $n\in\Z$. Let $\hat S = \hat S(m,K)$ be the weighted bilateral shift of multiplicity $\dim\h$ defined by
\begin{equation}\label{eq22}
\widehat{S} (\dots, h_{-1},h_0,h_1,\dots)= \Bigl(\dots,\sqrt{\frac{w_{-1}}{w_{-2}}}h_{-2},\sqrt{\frac{w_0}{w_{-1}}}h_{-1},\sqrt{\frac{w_1}{w_{0}}}h_{0},\dots\Bigr).
\end{equation}

Clearly $\widehat{S}$ is invertible and $(m+3)$-isometric. Moreover, $\widehat{S}$ is a dilation of $T$. We obtain the following results.

\begin{theorem}
For $T\in \B$ the following statements are equivalent:
\begin{itemize}
\item[(i)] $T$ has an invertible $m$-isometric dilation for some odd integer $m\ge 3$;
\item[(ii)] $\{T^n\}$ satisfies a growth condition
$$
\sup_{n\ge 1} \frac{\|T^n\|^2}{n^p}<\infty
$$
for some integer $p\ge 0$. 
\end{itemize}
\end{theorem}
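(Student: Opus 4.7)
The plan has two main parts. First I would dispatch the easy direction (i) $\Rightarrow$ (ii): any $m$-isometric power dilation $S\in\mathcal{B}(\ka)$ of $T$ satisfies $T^n=P_{\h}S^n|_{\h}$ for all $n\ge 0$, hence $\|T^n\|\le\|S^n\|$; combining this with the growth estimate $\|S^n\|^2\le Kn^{m-1}$ for $m$-isometries (already recalled in the introduction) yields (ii) with $p=m-1\ge 0$.

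For the converse (ii) $\Rightarrow$ (i), essentially all the work is done in the preamble: given a bound $\|T^n\|^2\le K^2 n^m$ for some integer $m\ge 0$, the bilateral weighted shift $\widehat S=\widehat S(m,K)$ defined by \eqref{eq22} is, as stated there, an invertible $(m+3)$-isometric power dilation of $T$. The only item left is a parity adjustment, since the theorem demands an invertible $m'$-isometric dilation with $m'$ odd and $m'\ge 3$. Starting from the hypothesis $\sup_{n\ge 1}\|T^n\|^2/n^p<\infty$, I would invoke the construction with parameter $\tilde m:=p$ if $p$ is even and $\tilde m:=p+1$ if $p$ is odd; the growth bound $\|T^n\|^2\le Cn^{\tilde m}$ still holds for $n\ge 1$ in either case, so $\widehat S(\tilde m,K)$ is an invertible $(\tilde m+3)$-isometric dilation with $\tilde m+3$ odd and $\ge 3$. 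An equally acceptable alternative would be to keep $\tilde m:=p$ and, in the case when $\tilde m+3$ happens to be even, appeal to Proposition 1.23 of \cite{AS1} to conclude that the invertible $(\tilde m+3)$-isometry $\widehat S$ is automatically a $(\tilde m+2)$-isometry, whose order is odd.

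The only point that is not entirely mechanical is checking that $\widehat S$ really is a \emph{power dilation} of $T$, and not merely an invertible $(m+3)$-isometry on some enlargement. I would justify this by observing that $\ell_+^2(\h)$ is an $\widehat S$-invariant subspace of $\ell^2(\Z,\h)$, and that, after matching the weights, the restriction of $\widehat S$ to $\ell_+^2(\h)$ coincides with the one-sided weighted forward shift produced in Theorem \ref{thm33}, which was exhibited there as a \emph{lifting} of $T$. Since a lifting is in particular a power dilation, transporting through the inclusions $\h\hookrightarrow\ell_+^2(\h)\hookrightarrow\ell^2(\Z,\h)$ gives $T^n=P_{\h}\widehat S^n|_{\h}$ for all $n\ge 0$, and this is the only place where the detailed choice of the weights $w_n$ really enters.
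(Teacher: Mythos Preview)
Your proposal is correct and follows essentially the same route as the paper, which likewise derives the theorem from Theorem~\ref{thm33} via the bilateral-shift extension $\widehat S$ described just before the statement, together with the parity reduction from \cite[Proposition~1.23]{AS1}. You supply more detail than the paper does (notably the verification that $\widehat S$ is a power dilation of $T$ through the chain $\h\hookrightarrow\ell_+^2(\h)\hookrightarrow\ell^2(\Z,\h)$), but the underlying argument is the same.
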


\begin{corollary} \label{C2.6}
Every power bounded operator has an invertible $3$-isometric dilation.
\end{corollary}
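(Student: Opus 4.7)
The plan is to apply the preceding theorem to a power bounded operator. If $T\in\mathcal{B}(\h)$ is power bounded then $K:=\max\{1,\sup_{n\ge 0}\|T^{n}\|\}$ is finite, and $\|T^{n}\|^{2}\le K^{2}=K^{2}n^{0}$ for every $n\ge 1$. Hence condition~(ii) of the preceding theorem is satisfied with $p=0$, so there exists an invertible $m$-isometric dilation for some odd $m\ge 3$.

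To pin down the exponent as exactly $m=3$, I would unpack the explicit bilateral-shift construction of \eqref{eq22} with growth parameter set to $m=0$ and with the constant $K$ above. This produces an invertible weighted bilateral shift $\hat S$ of multiplicity $\dim\h$ on $\ell^{2}(\Z,\h)$ which, as noted just before the preceding theorem, is $(m+3)=3$-isometric; invertibility is immediate since all weights $\sqrt{w_{n}/w_{n-1}}$ are strictly positive. The dilation property is built in: the positive-index part $\bigoplus_{n\ge 0}\h_{n}$ is $\hat S$-invariant, and on that invariant subspace $\hat S$ coincides with the expansive analytic $3$-isometric unilateral lifting $S$ of $T$ produced by Theorem~\ref{thm33} for $m=0$. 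Since $\h$ is embedded as the index $0$ summand, $\hat S^{n}|_{\h}=S^{n}|_{\h}$ for every $n\ge 0$, and the lifting identity $P_{\h}S^{n}|_{\h}=T^{n}$ transfers to the dilation relation $P_{\h}\hat S^{n}|_{\h}=T^{n}$ for all $n\ge 0$.

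There is no real obstacle: the result is an immediate specialization of the preceding theorem to $p=0$, exploiting the fact that $m=0$ is already even, so $m+3=3$ is the required odd exponent. The only bookkeeping is that the unilateral-to-bilateral extension preserves both the $3$-isometric order (same finite-difference check on the weights as in Theorem~\ref{thm33}) and the compressions to $\h$ (guaranteed by the $\hat S$-invariance of the positive-index part, which is where the action of $\hat S^{n}$ on $\h$ takes place for $n\ge 0$). Both points are entirely routine once the construction is in hand, so the proof reduces essentially to verifying that the constant-growth case $p=0$ feeds into the construction without modification.
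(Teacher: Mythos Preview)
Your approach is exactly the paper's: specialize the bilateral-shift construction \eqref{eq22} to the parameter $m=0$, obtaining an invertible $3$-isometric $\hat S$ that extends the unilateral lifting $S$ of Theorem~\ref{thm33} and hence dilates $T$. One minor slip worth fixing: $\h$ is \emph{not} embedded in $\ell^2_+(\h)$ as the index-$0$ summand---the embedding is the $S^*$-invariant subspace produced by \cite[Theorem~2.2]{M} in the proof of Theorem~\ref{thm33}---but this is harmless, since the identity $\hat S^{\,n}|_{\h}=S^{n}|_{\h}$ already follows from your prior observation that $\hat S$ restricts to $S$ on the $\hat S$-invariant subspace $\ell^2_+(\h)\supset\h$.
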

Since every invertible $2$-isometry is a unitary operator (see \cite[Proposition 1.23]{AS1}), Corollary~\ref{C2.6} is optimal.
%a $2$-isometric operator (which is not isometric) cannot be invertible, Corollary \ref{C2.6} is optimal.

We do not know the answer to the following question (which is a particular instance of what we discussed in Remark \ref{re34}).
\begin{problem}\label{problem}
Does every power bounded operator have a $2$-isometric lifting?
\end{problem}

\section{Convex operators}
In \cite{BS1}, it was proved that any concave operator $T$ (i.e., an operator satisfying $T^{*2}T^2-2T^*T+I\le 0$) has a $2$-isometric lifting. In this section we study the dual case.

We say that an operator $T\in \B$ is \emph{convex} if it satisfies the condition 
$$T^{*2}T^2-2T^*T+I\ge 0.$$
We show that convex operators satisfying the necessary growth condition \eqref{eq26} below
%$$\sup_{n\ge 0} \frac{\|T^n\|^2}{n+1}<\infty$$ 
have $2$-isometric liftings. Note that Theorem~\ref{thm33} gives only the existence of a $4$-isometric lifting, so the result for convex operators is stronger than the general one.

We begin with the following lemma.

\begin{lemma}\label{le25}
Let $T\in \B$ be a convex operator satisfying 
\begin{equation}\label{eq26}
c:=\sup_{n\ge 0} \frac{\|T^n\|^2}{n+1}<\infty.
\end{equation}
Let $\Delta= \frac{T^{*2}T^2}{2}-T^*T+\frac{I}{2}$ and $T_1$ on $\h_1=\h \oplus \h$ be the operator given by the block matrix
$$
T_1=
\begin{pmatrix}
T & 0\\
\Delta^{1/2} & 0
\end{pmatrix}.
$$
Then the following statements hold:
\begin{itemize}
\item[(i)] $\|T_1^nh\|^2=\frac{1}{2} \bigl(\|T^{n+1}h\|^2+\|T^{n-1}h\|^2\bigr)$ for all $h\in \h \cong \h \oplus \{0\} $, $n\ge 1$;
\item[(ii)] $T_1$ satisfies \eqref{eq26}, that is $\|T_1^n\|^2\le c(n+1)$ for $n\ge 0$;
\item[(iii)] $T_1$ is convex.
\end{itemize}
\end{lemma}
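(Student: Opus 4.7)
The plan is to exploit the very special matrix structure of $T_1$: since the second block column is zero, $T_1$ annihilates the second summand, so for every $n\ge 1$ and every $x=h\oplus k \in \h\oplus\h$ we have $T_1^n x = T_1^n(h\oplus 0)$. A quick induction on $n$, using the block formula for $T_1$, should yield
$$
T_1^n(h\oplus 0) = \bigl(T^n h,\; \Delta^{1/2}T^{n-1}h\bigr), \qquad n\ge 1,
$$
which reduces everything to manipulating scalar norms of powers of $T$ applied to $h$.

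For (i) I would simply compute $\|T_1^n h\|^2 = \|T^n h\|^2 + \langle \Delta T^{n-1}h, T^{n-1}h\rangle$ and then substitute the definition $\Delta=\tfrac12 T^{*2}T^2 - T^*T + \tfrac12 I$. The cross terms cancel the extra $\|T^n h\|^2$, leaving the asserted symmetric average $\tfrac12(\|T^{n+1}h\|^2+\|T^{n-1}h\|^2)$. For (ii), because $T_1 x = T_1(h\oplus 0)$ already gives $T_1^n x = T_1^n(h\oplus 0)$ for $n\ge 1$, I can apply (i) together with the hypothesis $\|T^k\|^2\le c(k+1)$ to estimate
$$
\|T_1^n x\|^2 \le \tfrac{c}{2}\bigl((n+2)+n\bigr)\|h\|^2 = c(n+1)\|h\|^2 \le c(n+1)\|x\|^2,
$$
while the case $n=0$ is the trivial $\|x\|^2\le c\|x\|^2$ (note $c\ge 1$ since $T^0=I$).

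For (iii) the convexity inequality $\|T_1^2 x\|^2 - 2\|T_1 x\|^2 + \|x\|^2 \ge 0$ at $x=h\oplus k$ becomes, after inserting the formula from (i) for $n=1,2$ and the identity $\|x\|^2=\|h\|^2+\|k\|^2$,
$$
\tfrac12\|T^3 h\|^2 - \|T^2 h\|^2 + \tfrac12\|Th\|^2 + \|k\|^2 \ge 0.
$$
The first three terms combine into $\tfrac12\langle T^*\bigl(T^{*2}T^2 - 2T^*T + I\bigr)T h, h\rangle$, which is non-negative by the convexity assumption on $T$, and the term $\|k\|^2$ is obviously non-negative.

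The verification is essentially a bookkeeping exercise; the one point worth singling out is that the coefficient $\tfrac12$ baked into the definition of $\Delta$ is exactly what is needed for (i) to produce the symmetric average, which in turn causes the convexity defect of $T_1$ to reduce to $T^*(\,\cdot\,)T$ applied to the convexity defect of $T$. There is no real obstacle — just the need to keep track of the correct factors so that the algebra collapses as above.
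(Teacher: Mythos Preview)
Your proof is correct and follows essentially the same approach as the paper: the same inductive formula for $T_1^n$, the same substitution of $\Delta$ in (i), and the same reduction of (ii) via $T_1^n(h\oplus k)=T_1^n(h\oplus 0)$. Your argument for (iii), expanding (i) at $n=1,2$ and recognizing $\tfrac12\langle T^*(T^{*2}T^2-2T^*T+I)Th,h\rangle$, is a minor rearrangement of the paper's computation, which instead writes the convexity defect as $\|\Delta^{1/2}Th\|^2+\|k\|^2$ directly from the block form; the two are the same identity.
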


\begin{proof}
$(i)$ By induction we have
$$
T_1^n =
\begin{pmatrix}
T^n & 0\\
\Delta^{1/2}T^{n-1} & 0
\end{pmatrix} 
$$
for all integers $n\ge 1$. For $h\in \h\cong \h \oplus \{0\}$ and $n\ge 1$ we obtain
\begin{eqnarray*}
\|T_1^nh\|^2&=& \|T^nh\|^2+\|\Delta^{1/2}T^{n-1}h\|^2=\|T^nh\|^2+ \bigl\langle T^{*(n-1)}\Delta T^{n-1}h,h\bigr\rangle \\
&=& \|T^nh\|^2+ \frac{\|T^{n+1}h\|^2}{2}- \|T^nh\|^2+ \frac{\|T^{n-1}h\|^2}{2}\\
 &=& \frac{1}{2} \bigl(\|T^{n+1}h\|^2+\|T^{n-1}h\|^2\bigr).
\end{eqnarray*}

$(ii)$ For $u=h\oplus h'\in \h_1$ and $n\ge 1$ we have by (i) and \eqref{eq26}, 
$$
\|T^n_1u\|^2=\|T^n_1h\|^2\le \frac{c}{2} \bigl((n+2)\|h\|^2+n\|h\|^2\bigr)=c(n+1)\|h\|^2\le c(n+1)\|u\|^2.
$$
So $\|T^n_1\|^2\le c (n+1)$.

$(iii)$ For $u=h \oplus h'\in \h_1$ we have (by (i))
$$
\|T_1^2u\|^2-2\|T_1u\|^2 +\|u\|^2 =\|T_1^2h\|^2- 2\|T_1h\|^2+\|u\|^2=
$$
$$
\|T_1^2h\|^2-(\|T^2h\|^2+\|h\|^2)+\|u\|^2\ge \|T_1^2h\|^2 -\|T^2h\|^2=\|\Delta ^{1/2}Th\|^2\ge 0.
$$
Hence $T_1$ is a convex operator on $\h_1$. This finishes the proof.
\end{proof}

In the sequel, for an operator $T$, we denote $\Delta_T=T^*T-I$.

\begin{theorem}\label{thm26}
For a convex operator $T\in \B$ the following statements are equivalent:
\begin{itemize}
\item[(i)] $T$ has a 2-isometric lifting; 
\item[(ii)] $T$ has a 2-isometric lifting $S\in \mathcal{B}(\ka)$ with $\ka \ominus \h \subset {\rm Ker}(\Delta_S)$;
\item[(iii)] $T$ satisfies the growth condition \eqref{eq26}. 
\end{itemize}
\end{theorem}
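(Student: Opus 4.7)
My plan is to prove the equivalences cyclically: $(ii) \Rightarrow (i) \Rightarrow (iii) \Rightarrow (ii)$. The first implication is immediate. For $(i) \Rightarrow (iii)$ I would combine the polynomial-growth bound $\|S^n\|^2 \le Kn^{m-1}$ for $m$-isometries (noted in the introduction) with the lifting inequality $\|T^n h\| \le \|S^n h\|$ on $\h$ to get $\|T^n\|^2 \le K(n+1)$, which is precisely \eqref{eq26}.

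The heart of the argument is $(iii) \Rightarrow (ii)$. The strategy is to first produce an ``asymptotic defect'' operator $\Delta_\infty$ on $\h$, and then use it to build an explicit $2$-isometric lifting. Setting $A_k := T^{*(k+1)}T^{k+1} - T^{*k}T^k$, convexity of $T$ gives
$$
A_{k+1} - A_k = T^{*k}\bigl(T^{*2}T^2 - 2T^*T + I\bigr)T^k \ge 0,
$$
so $\{A_k\}$ is monotone in the strong operator order. The telescoping identity $\sum_{j=0}^{n-1}A_j = T^{*n}T^n - I$ and \eqref{eq26} yield $\sum_{j=0}^{n-1}\langle A_j h, h\rangle \le c(n+1)\|h\|^2$; combined with the monotonicity of the summands, a short Ces\`{a}ro-type estimate (comparing $(n-k)\langle A_k h,h\rangle$ to the tail of the sum) then gives $\langle A_k h, h\rangle \le c\|h\|^2$ uniformly in $k$. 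Hence $A_k$ converges strongly to some bounded selfadjoint $\Delta_\infty$ with $\Delta_T := T^*T - I = A_0 \le \Delta_\infty \le cI$, and taking limits in the identity $T^*A_k T = A_{k+1}$ gives the crucial invariance $T^*\Delta_\infty T = \Delta_\infty$.

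With $\Delta_\infty$ in hand, I would set $\ka = \h \oplus \ell_+^2(\h)$ and define
$$
S(h_0, h_1, h_2, \ldots) = \bigl(Th_0,\, (\Delta_\infty - \Delta_T)^{1/2}h_0,\, h_1, h_2, \ldots\bigr),
$$
with the square root well-defined since $\Delta_\infty \ge \Delta_T$. Then $S$ clearly lifts $T$; a direct matrix computation yields $S^*S - I = \Delta_\infty \oplus 0$, so $\ka \ominus \h \subset {\rm Ker}(\Delta_S)$; and the invariance $T^*\Delta_\infty T = \Delta_\infty$ translates exactly into $S^*\Delta_S S = \Delta_S$, which is equivalent to $S$ being a $2$-isometry. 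The principal obstacle is the extraction of $\Delta_\infty$: convexity alone provides monotonicity, and \eqref{eq26} alone bounds partial sums, but only their combination yields a strongly convergent bounded limit. Once $\Delta_\infty$ and its invariance are established, the remaining verifications are routine matrix algebra.
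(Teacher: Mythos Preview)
Your argument is correct, and it takes a genuinely different route from the paper's proof.

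The paper establishes $(iii)\Rightarrow(i)$ first, by iterating Lemma~\ref{le25}: it builds an increasing tower of convex liftings $T_j$ on $\h_j=\h_{j-1}\oplus\h_{j-1}$, passes to the completion $\ka$, and verifies that the limit operator $S$ is a $2$-isometry. Only after this construction does the paper upgrade to $(ii)$, and it does so by invoking two external results from \cite{BS2}: Theorem~4.1 there supplies a positive $A$ with $T^*AT\le A$ and $\Delta_T\le A$, convexity is used to show $T^{*n}AT^n$ decreases to some $A_T$ satisfying $\Delta_T\le A_T=T^*A_TT$, and then Theorem~3.2 of \cite{BS2} converts this into a lifting with the kernel property $\ka\ominus\h\subset\ker(\Delta_S)$.

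Your approach bypasses both the inductive tower and the appeals to \cite{BS2}. You extract the fixed point $\Delta_\infty$ directly from convexity plus the growth bound \eqref{eq26}, via the monotone sequence $A_k=T^{*(k+1)}T^{k+1}-T^{*k}T^k$ and the Ces\`aro estimate, and then write down the lifting on $\h\oplus\ell_+^2(\h)$ explicitly. This is more elementary and entirely self-contained: your $\Delta_\infty$ plays exactly the role of the paper's $A_T$, but you obtain it without the detour through an auxiliary $A$. What the paper's route buys, on the other hand, is the additional structural information coming from Lemma~\ref{le25} (the explicit recursion $\|T_{j+1}^nu\|^2=\tfrac12(\|T_j^{n+1}u\|^2+\|T_j^{n-1}u\|^2)$ and the invariance $S^*S\h\subset\h$), which may be of independent interest even though it is not strictly needed for the stated theorem.
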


\begin{proof}
Assume that $T$ is convex. We firstly remark that the condition \eqref{eq26} is necessary for a 2-isometry (see \cite[page 389]{AS1}) and so for any operator which has a 2-isometric lifting. Therefore (i) implies (iii).

Suppose now that $T$ satisfies \eqref{eq26}. Using the previous lemma inductively we find Hilbert spaces $\h_j=\h_{j-1} \oplus \h_{j-1}$ and convex operators $T_j \in \mathcal{B}(\h_j)$ for $j\ge 1$ such that 
$$ \h \cong \h_0 \subset \h_1 \subset \h_2 \subset \cdot \cdot \cdot ,$$ $T_j$ is a lifting of $T_{j-1}$ (with $T_0=T$), $\|T_j^n\|^2\le c(n+1)$ and
\begin{equation}\label{eq27}
\|T_{j+1}^nu\|^2=\frac{1}{2}\bigl(\|T_j^{n+1}u\|^2+\|T_j^{n-1}u\|^2\bigr),
\end{equation}
for all $n\ge 1$, $j\ge 0$, $u \in \h_j$. More precisely, $\h_{j-1}$ is embedded into $\h_j$  according to the formulas 
$$\h_{j-1}\cong \h_{j-1} \oplus \{0\} \subset \h_{j-1} \oplus \h_{j-1} =\h_j.$$

Let $\ka_0=\bigcup_{j=0}^{\infty}\h_j$ and let $\ka$ be the completion of $\ka_0$. 

Let $P_j\in \mathcal{B}(\ka)$ be the orthogonal projection onto $\h_j$. 
Clearly, if $u \in \h_j$ for some $j$, then
$$
\|T_ju\|^2\le \|T_{j+1}u\|^2\le ... \le 2c\|u\|^2,
$$
so the limit $\lim_{k\to \infty} \|T_ku\|$ exists.
For $j<k$ we have
$$
\|T_{k}u-T_ju\|^2=\|T_{k}u\|^2-\|T_ju\|^2,
$$ 
so $\lim_{k\to\infty}T_ku$ exists.

For $ u\in \ka_0$ we define $Su=\lim_{j\to \infty} T_ju$. Clearly $\|S\|\le \sqrt{2c}$ and $S$ can be extended continuously to an operator acting on $\ka$ denoted by the same symbol $S$.

If $u\in\h_j$ for some $j$, then similarly
$$
\|T_j^2u\|^2\le \|T_{j+1}^2u\|^2\le \cdots \le 3c\|u\|^2,
$$
so $\lim_{j\to\infty} \|T_j^2u\|$ exists for all $u\in\ka_0$.
For $u\in\ka_0$ we have
$$
S^2u=\lim_{j\to\infty}T_jP_jSu=\lim_{j\to\infty} T_jP_jT_jP_ju=\lim_{j\to\infty} T_j^2P_ju=\lim_{j\to\infty} T_j^2u.
$$
Hence 
$$
\|S^2u\|^2-2\|Su\|^2+\|u\|^2=\lim_{j\to \infty} (\|T_j^2u\|^2-2\|T_ju\|^2+\|u\|^2)\ge 0
$$
for all $u\in\ka_0$, and so 
$S$ is a convex operator.

We show that $S$ is in fact a $2$-isometry. Suppose on the contrary that there exists $u \in \ka_0$ such that
$$
\delta:= \|S^2u\|^2-2\|Su\|^2+\|u\|^2>0.
$$
Let $j\ge 1$ be such that $u \in \h_j$ and 
$$
\|T_{j-1}^2u\|^2> \|S^2u\|-\delta.
$$
Then by \eqref{eq27} we obtain
\begin{eqnarray*}
\delta &\le & \|S^2u\|^2-2\|T_ju\|^2 + \|u\|^2\\
&=& \|S^2u\|^2-(\|T_{j-1}^2u\|^2+\|u\|^2 )+\|u\|^2\\
&=& \|S^2u\|^2-\|T_{j-1}^2u\|^2<\delta,
\end{eqnarray*}
a contradiction. Hence $\delta=0$, and so $S$ is a $2$-isometry.
% and $S$ is a lifting for $T$ because $P_{\h}Su=\lim_{j\to \infty} P_{\h}T_ju=TP_{\h}u$ for $u \in \ka_0$ ($T_j$ being a lifting for $T$).
Also, $S$ is a lifting for $T$ because
$$
P_{\h}Su=\lim_{j\to \infty} P_{\h}T_ju =TP_{\h}u
$$
for $u\in \ka_0$ ($T_j$ being a lifting for $T$).

In order to show that the assertion (ii) is true, we first prove that $S^*S\h\subset \h$. We have on $\h_1= \h \oplus \h$,
$$
T_1^*T_1=
\begin{pmatrix}
T^*T+\Delta & 0\\
0 & 0
\end{pmatrix}
=\frac{1}{2} 
\begin{pmatrix}
T^{*2}T^2+I_{\h} & 0\\
0 & 0
\end{pmatrix},
$$
whence it follows $T_1^*T_1\h \subset \h$ (by identifying $\h=\h\oplus \{0\}$ into $\h_1$). Also, since 
$$
T_1^{*2}T_1^2=
\begin{pmatrix}
T^{*2}T^2+T^*\Delta T& 0\\
0 & 0
\end{pmatrix},
$$
we infer $T_1^{*2}T_1^2\h \subset \h$.

On the other hand, denoting $\Delta_1= \frac{T_1^{*2}T_1^2}{2}-T_1^*T_1+\frac{I_{\h_1}}{2}$, we have that the operator $T_2$ on $\h_2=\h_1 \oplus \h_1$ has the representation 
$$
T_2=
\begin{pmatrix}
T_1 & 0\\
\Delta_1^{1/2} & 0
\end{pmatrix}.
$$
We get as above that $T_2^*T_2 = (T_1^*T_1+\Delta_1) \oplus 0=\frac{1}{2} (T_1^{*2}T_1^2+I_{\h_1})\oplus 0$. This gives (by the above inclusion) that $T_2^*T_2 \h \subset \h$, where clearly we identify $\h$ with $\h \oplus \{0\} \oplus \{0_{\h_1}\}$ in $\h_2$. By induction (and the corresponding identification of $\h$ into $\h_j$) one can see that $T_j^*T_j\h \subset \h$ for all $j\ge 1$.

Now, for every $h\in \h$ and $k \in \ka \ominus \h$, we have
\begin{eqnarray*}
\langle S^*Sh,k\rangle &=& \lim_{j\to \infty} \langle T_jh,Sk\rangle =\lim_{j \to \infty} \langle T_jh, P_jSk \rangle\\
&=& \lim_{j\to \infty} \langle T_jh, T_jP_jk \rangle =\lim_{j\to \infty} \langle T_j^*T_jh, k\rangle =0.
\end{eqnarray*}
Here we used that $S$ is a lifting for $T_j$, that $T_j^*T_jh\in \h$, $k\in \ka \ominus \h$ and $\h \subset \h_j$. Thus $S^*S\h \subset \h$.

Next, by \cite[Theorem 4.1]{BS2}, there exists a positive operator $A\in \B$ such that $T^*AT\le A$ and $\Delta_T \le A$. From these relations and the fact that $T$ is convex,\,  i.e. $\Delta_T \le T^*\Delta_TT$, we infer that
$$
\Delta_T \le T^{*n}\Delta_T T^n \le T^{*n}AT^n \le T^{*(n-1)}AT^{n-1}\le A
$$
for every integer $n\ge 1$. So it follows that the sequence $\{T^{*n}AT^n\}$ converges strongly to a positive operator $A_T\in \B$ which satisfies the relations $\Delta_T \le A_T =T^*A_TT$. Now, if $T$ is not a contraction, then $A_T \neq 0$. These relations show, by \cite[Theorem 3.2]{BS2}, that $T$ has a 2-isometric lifting $S$ on a space $\ka\supset \h$ such that $\ka \ominus \h \subset {\rm Ker}(\Delta_S)$. When $T$ is a contraction, $T$ has an isometric lifting $S$ satisfying the previous inclusion. In both cases the assertion (ii) holds. So (iii) implies (ii). As (ii) implies (i), the proof is complete. 
\end{proof}

Theorem \ref{thm26} shows that if a convex operator $T$ satisfies \eqref{eq26}, then it has a 2-isometric lifting $S$ on $\ka=\h \oplus \h^\perp$ of the form
\begin{equation}\label{eq333}
S=
\begin{pmatrix}
T & 0\\
X & V
\end{pmatrix}
\end{equation}
with $V$ an isometry on $\h^{\perp}$ satisfying $V^*X=0$ (because $\Delta_S\ge 0$). It is known from \cite{BS1,BS2} that the concave operators also have liftings of the form \eqref{eq333}, so the two dual classes of operators (convex, or concave) behave similarly in this context. Of course, every isometric lifting of a contraction is of the form \eqref{eq333} (see \cite{FF, SFBK}).

In order to obtain some applications of Theorem \ref{thm26}, we describe now the operators which have convex liftings of the form \eqref{eq333}, as well as convex liftings as those from Lemma \ref{le25}.

\begin{proposition}\label{pr33}
For an operator $T\in \B$ the following statements hold:
\begin{itemize}
\item[(a)] The operator $T$ has a convex lifting $\widehat{T}$ of the form \eqref{eq333} on $\widehat{\h}=\h\oplus \h'$ with $V=\widehat{T}|_{\h'}$ an isometry and $V^*X=0$ if and only if there exists a selfadjoint operator $A\in \B$ such that $$\Delta_T\le A\le T^*AT.$$

If this is the case and $T$ is power bounded, then $T$ has a 2-isometric lifting.

\item[(b)] The operator $T$ has a convex lifting $\widehat{T}$ on $\widehat{\h}=\h\oplus \h'$ with $\widehat{T}|_{\h'}=0$ if and only if there exists a selfadjoint operator $A\in \B$ such that $$0\le A-\Delta_T\le T^*AT-A.$$

If this is the case and $T$ satisfies the condition \eqref{eq26}, then $T$ has a 2-isometric lifting $S$ on $\ka$ such that $\ka \ominus \h\subset {\rm Ker}(\Delta_S)$.
\end{itemize}
\end{proposition}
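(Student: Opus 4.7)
My plan is to reduce both characterizations to explicit $2\times 2$ matrix computations on $\widehat{\h}=\h\oplus \h'$ and to identify the canonical substitution that makes the convexity inequality equivalent to the stated condition on $A$. Writing the candidate lifting as $\widehat{T}=\begin{pmatrix} T & 0 \\ X & V\end{pmatrix}$, I would compute $\widehat{T}^*\widehat{T}$ and $\widehat{T}^{*2}\widehat{T}^2$ blockwise. In part~(a), the hypotheses $V^*V=I$ and $V^*X=0$ (hence also $X^*V=0$) annihilate the off-diagonal blocks and make the $(2,2)$-block the identity, so convexity holds trivially on $\h'$ and reduces to a single inequality on $\h$. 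In part~(b), $V=0$ makes the same reduction immediate. In both cases I set $A:=\Delta_T+X^*X$, so that the positivity of $X^*X$ translates into $\Delta_T\le A$; the $(1,1)$-block of $\widehat{T}^{*2}\widehat{T}^2-2\widehat{T}^*\widehat{T}+I$ then rearranges to $T^*AT-A$ in part~(a) (giving $\Delta_T\le A\le T^*AT$) and to $T^*AT-2A+\Delta_T$ in part~(b) (giving $A-\Delta_T\le T^*AT-A$).

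For the converse direction, given an admissible $A$ I would set $X:=(A-\Delta_T)^{1/2}$; in part~(b) take $\h':=\h$ and $V:=0$, and in part~(a) embed $\h$ as the zeroth coordinate of $\h':=\ell_+^2(\h)$ and take $V$ the unilateral shift, so that $V^*X=0$ holds automatically. Retracing the matrix computation then shows $\widehat{T}$ is a convex lifting of $T$ of the prescribed form.

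For the secondary assertions on the existence of a $2$-isometric lifting I would apply Theorem~\ref{thm26} to the \emph{convex} operator $\widehat{T}$, after verifying the growth bound~\eqref{eq26} for $\widehat{T}$. In part~(a), the orthogonality relations $V^{*k}X=0$ for $k\ge 1$ yield the mutually orthogonal decomposition
$$\widehat{T}^n(h,0)=T^nh\,\oplus\,\sum_{j=0}^{n-1}V^jXT^{n-1-j}h,$$
so $\|\widehat{T}^n(h,0)\|^2=\|T^nh\|^2+\sum_{k=0}^{n-1}\|XT^kh\|^2=O(n)\|h\|^2$ when $T$ is power bounded; on vectors in $\h'$ the norm is preserved since $V$ is isometric. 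In part~(b), the identity $\|\widehat{T}^n(h,h')\|^2=\|T^nh\|^2+\|XT^{n-1}h\|^2$ gives the same growth under~\eqref{eq26} for $T$. The resulting $2$-isometric lifting $S$ of $\widehat{T}$ is automatically a lifting of $T$. The main obstacle occurs in part~(b), where one must upgrade the inclusion $\ka\ominus\widehat{\h}\subset{\rm Ker}(\Delta_S)$ supplied by Theorem~\ref{thm26} to $\ka\ominus\h\subset{\rm Ker}(\Delta_S)$; for this I would use $\widehat{T}|_{\h'}=0$ to conclude that $Sh'\in\ka\ominus\widehat{\h}$ for $h'\in\h'$, so that $\|S^2h'\|=\|Sh'\|$, and then the $2$-isometric identity $\|S^2h'\|^2-2\|Sh'\|^2+\|h'\|^2=0$ forces $\|Sh'\|=\|h'\|$, placing $\h'$ inside ${\rm Ker}(\Delta_S)$ and completing the proof.
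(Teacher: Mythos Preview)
Your proof is correct and follows essentially the same route as the paper: both directions of (a) and (b) are handled by the substitution $A=\Delta_T+X^*X$ and the converse by $X=(A-\Delta_T)^{1/2}$ (the paper takes $\h'=\ell_+^2(\overline{(A-\Delta_T)\h})$ in (a) and $\h'=\overline{(A-\Delta_T)\h}$ in (b), but your larger choices work just as well), and the secondary assertions are obtained by checking the growth bound for $\widehat{T}$ and invoking Theorem~\ref{thm26}.

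The only substantive difference is the last step of (b). The paper argues structurally: writing $S$ on $\ka=\widehat{\h}\oplus\ka_0$ with $V_0:=S|_{\ka_0}$ an isometry and $V_0^*X_0=0$ (both consequences of $\ka_0\subset\mathrm{Ker}(\Delta_S)$), it observes that $V:=S|_{\h^\perp}$ has the block form $\begin{pmatrix}V_0&V_1\\0&0\end{pmatrix}$ with $V_0^*V_1=0$, hence is a \emph{power bounded} $2$-isometry and therefore an isometry. Your argument bypasses this: from $P_{\widehat{\h}}Sh'=\widehat{T}h'=0$ you get $Sh'\in\ka\ominus\widehat{\h}\subset\mathrm{Ker}(\Delta_S)$, so $\|S^2h'\|=\|Sh'\|$, and the $2$-isometric identity forces $\|Sh'\|=\|h'\|$. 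This is a genuinely shorter and more elementary alternative; the paper's version, on the other hand, makes explicit that $S|_{\h^\perp}$ is an isometry (not merely norm-preserving on $\h'$), which is exactly the stronger structural conclusion advertised after \eqref{eq333}.
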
 

\begin{proof}
(a) Assume that $\widehat{T}$ is a convex lifting for $T$ as in (a). Then, using the form \eqref{eq333} for $\widehat{T}$, we get $\Delta_{\widehat{T}}=A\oplus 0$ on $\widehat{\h}=\h\oplus \h'$, where $A=X^*X+\Delta_T$ is selfadjoint. As $\widehat{T}$ is convex we have $\widehat{T}^*\Delta_{\widehat{T}}\widehat{T}\ge \Delta_{\widehat{T}}$. This, by \eqref{eq333}, implies that $T^*AT\ge A$, and clearly $A\ge \Delta_T$.

Conversely, let us assume that $\Delta_T \le A \le T^*AT$ for some selfadjoint operator $A$ on $\h$. Define the lifting $\widehat{T}$ of $T$ on the space $\widehat{\h}=\h\oplus\ell_+^2(\D)$ with the block matrix
$$
\widehat{T}=
\begin{pmatrix}
T & 0\\
(A-\Delta_T)^{1/2} & S_+
\end{pmatrix},
$$
where $\D=\overline{(A-\Delta_T)\h}$ and $S_+$ is the forward shift on $\ell_+^2(\D)$. Then $\Delta_{\widehat{T}}=A \oplus 0$ and $\widehat{T}^*\Delta_{\widehat{T}}\widehat{T}= T^*AT\oplus 0\ge A\oplus 0=\Delta_{\widehat{T}}$ (by our assumption). Hence $\widehat{T}$ is a convex lifting for $T$ of the form \eqref{eq333}. 

In addition, if $T$ is power bounded, then $\|\widehat{T}^n\|^2\le K\cdot n$, for some constant $K>0$. Indeed, we have
$$
\widehat{T}^n=
\begin{pmatrix}
T^n & 0\\
\sum_{j=0}^{n-1} V^{n-j-1}XT^j & V^n
\end{pmatrix}, \quad
n\ge 1,
$$
where $V$ is an isometry. 
%, we infer that $\|\widehat{T}^n\|^2\le K\cdot n$, for some constant $K>0$. 
So $\widehat{T}$ satisfies \eqref{eq26} and, by Theorem \ref{thm26}, $\widehat{T}$ has a 2-isometric lifting which is also a lifting of $T$.  

(b) Assume that $\widehat{T}$ is a convex lifting for $T$ on $\widehat{\h}=\h\oplus \h'$ of the form 
\begin{equation}\label{eq334}
\widehat{T}=
\begin{pmatrix}
T & 0\\
X & 0
\end{pmatrix}.
\end{equation}
Then $\Delta_{\widehat{T}}=A\oplus -I$, where $A=X^*X+\Delta_T$ is selfadjoint. Since $\widehat{T}$ is convex, we have
$$
\widehat{T}^*\Delta_{\widehat{T}}\widehat{T}= (-X^*X+T^*AT)\oplus 0\ge A\oplus-I.
$$
We infer that $T^*AT\ge A+X^*X$, or equivalently
$$
T^*AT-A\ge X^*X=A-\Delta_T\ge 0.
$$

Conversely, we suppose that there exists a selfadjoint operator $A$ in $\B$ such that $T^*AT-A\ge A-\Delta_T\ge 0$. Define $\widehat{T}$ on the space $\widehat{\h}=\h\oplus\overline{(A-\Delta_T)\h}$ as in \eqref{eq334} with $X=(A-\Delta_T)^{1/2}$. Then, as above, we have $\Delta_{\widehat{T}}=A\oplus -I$ and $\widehat{T}^*\Delta_{\widehat{T}}\widehat{T}= (T^*AT-A+\Delta_T)\oplus 0$, and our assumption ensures that $\widehat{T}^*\Delta_{\widehat{T}}\widehat{T}\ge \Delta_{\widehat{T}}$. Hence $\widehat{T}$ is a convex lifting for $T$ of the form \eqref{eq334}. In addition, if $T$ satisfies the condition \eqref{eq26}, then $\widehat{T}$ also satisfies this condition. Therefore, by Theorem \ref{thm26}, $\widehat{T}$ has a 2-isometric lifting $S$ on $\ka= \widehat{\h}\oplus \ka_0$ such that $V_0=S|_{\ka_0}$ is an isometry. Then $S$ is also a lifting for $T$, so $\h^{\perp}=\ka_0 \oplus \h'$ is an invariant subspace for $S$ and $V=S|_{\h^{\perp}}$ is a 2-isometry.

Now, taking into account the form \eqref{eq334} of $\widehat{T}$, we infer that $V$ has on $\h^{\perp}=\ka_0\oplus \h'$ the form
$$
V=
\begin{pmatrix}
V_0 & V_1\\
0 & 0
\end{pmatrix},
\quad V_0^*V_1=0.
$$
Hence $V$ is power bounded and being a 2-isometry it follows that $V$ is an isometry. This means that the lifting $S$ of $T$ has the property stated in the assertion (b).  
\end{proof}

Proposition \ref{pr33} (a) gives a partial answer to Problem \ref{problem}.

\section{Foguel-Hankel operators}
We consider now other operators which have $3$-isometric liftings but not $2$-isometric liftings. We start with a general result proving $3$-isometric liftings for some Foguel-Hankel type operators, as considered in \cite{B,DP,P, Pau}.

\begin{theorem}\label{thm37}
Let $T\in \B$ be an operator such that, with respect to an orthogonal decomposition $\h=\h_0 \oplus \h_1$,  has the block matrix form
\begin{equation}\label{eq33}
T=
\begin{pmatrix}
C_0 & C\\
0 & C_1
\end{pmatrix},
\end{equation}
where $C_i$ are contractions on $\h_i$ ($i=0,1$) and $C\in \mathcal{B}(\h_1,\h_0)$ is such that $CC_1=C_0C$. Then $T$ has a 3-isometric lifting $S$ on $\ka \supset \h$.
\end{theorem}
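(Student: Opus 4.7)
The plan is to construct $S$ as a $2\times 2$ upper-triangular block operator on $\ka=\ka_0\oplus \ka_1$, where $\ka_i$ carries an isometric lifting of $C_i$ and the off-diagonal entry is a commutant-lift of $C$.

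First, since $C_0$ and $C_1$ are contractions, I would take isometric liftings $V_0\in\mathcal{B}(\ka_0)$ of $C_0$ and $V_1\in\mathcal{B}(\ka_1)$ of $C_1$ via the classical Sz.-Nagy construction (which produces isometries that are simultaneously isometric dilations and liftings in the sense of the introduction). Next, from the intertwining hypothesis $CC_1=C_0C$ and the Sz.-Nagy--Foias commutant lifting theorem applied with $V_0,V_1$ in the role of the isometric dilations, I would obtain $\tilde{C}:\ka_1\to\ka_0$ satisfying $V_0\tilde{C}=\tilde{C}V_1$ and $P_{\h_0}\tilde{C}=CP_{\h_1}$, and set
$$
S=\begin{pmatrix}V_0 & \tilde{C}\\ 0 & V_1\end{pmatrix}
$$
on $\ka=\ka_0\oplus \ka_1\supset \h_0\oplus \h_1=\h$. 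The identities $P_{\h_i}V_i=C_iP_{\h_i}$ combined with $P_{\h_0}\tilde{C}=CP_{\h_1}$ then give $P_\h S=TP_\h$, so $S$ is a lifting of $T$.

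The main content is the $3$-isometry of $S$. Iterating $V_0\tilde{C}=\tilde{C}V_1$ yields $V_0^k\tilde{C}=\tilde{C}V_1^k$ for all $k\ge 0$, and a short induction then gives
$$
S^n=\begin{pmatrix}V_0^n & n\,V_0^{n-1}\tilde{C}\\ 0 & V_1^n\end{pmatrix}\qquad (n\ge 1).
$$
Using that $V_0$ and $V_1$ are isometries (in particular $V_0^{*(n-1)}V_0^n=V_0$), a direct calculation produces, for $u=u_0\oplus u_1\in\ka_0\oplus \ka_1$,
$$
\|S^nu\|^2=\|u_0\|^2+\|u_1\|^2+2n\operatorname{Re}\langle V_0u_0,\tilde{C}u_1\rangle+n^2\|\tilde{C}u_1\|^2,
$$
a polynomial of degree at most $2$ in $n$. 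Since an operator $S$ is $m$-isometric exactly when $n\mapsto\|S^nu\|^2$ is a polynomial of degree at most $m-1$ for every vector $u$, this shows $S$ is $3$-isometric.

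I expect the principal obstacle to be producing $\tilde{C}$ with these two properties simultaneously; once commutant lifting supplies such a $\tilde{C}$, the rest of the proof reduces to block-matrix bookkeeping and the polynomial characterization of $m$-isometries.
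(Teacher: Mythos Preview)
Your proof is correct and follows essentially the same route as the paper: both take minimal isometric liftings $V_i$ of $C_i$, invoke the commutant lifting theorem to produce $\tilde C$ with $V_0\tilde C=\tilde C V_1$ and $P_{\h_0}\tilde C=CP_{\h_1}$, and set $S=\begin{pmatrix}V_0&\tilde C\\0&V_1\end{pmatrix}$. The only difference is cosmetic: the paper verifies the $3$-isometry by writing $S=(V_0\oplus V_1)+Q$ with $Q$ a commuting nilpotent of order two and citing the result that an isometry plus a commuting nilpotent of order $2$ is a $3$-isometry, whereas you compute $\|S^n u\|^2$ directly and appeal to the polynomial characterisation of $m$-isometries; similarly, the paper checks the lifting property by exhibiting the block form of $S$ on $\h\oplus(\ka\ominus\h)$, while you verify $P_\h S=TP_\h$ directly from the three compression identities.
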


\begin{proof}
Consider the minimal isometric lifting $V_i$ for $C_i$ on the space $\ka_i= \h_i\oplus \ell_+^2(\D _{C_i})$, where $\D_{C_i}$ is the defect space of $C_i$, for $i=0,1$. So $V_i$ has on $\ka_i$ the block matrix form
\begin{equation}\label{eq35}
V_i=
\begin{pmatrix}
C_i & 0\\
D_i & S_i
\end{pmatrix},
\end{equation}
where $D_i =(I-C_i^*C_i)^{1/2}$ is the defect operator of $C_i$. As $CC_1=C_0C$, it follows by the commutant lifting theorem (see \cite{FF}) that there exists an operator $\widetilde{C}\in \mathcal{B}(\ka_1,\ka_0)$ such that $P_{\h_0} \widetilde{C}=CP_{\h_1}$ and $\widetilde{C}V_1=V_0\widetilde{C}$. So $\widetilde{C}$ from $\ka_1$ into $\ka_0$ has (respectively) the block matrices decompositions
\begin{equation}\label{eq36}
\widetilde{C}=
\begin{pmatrix}
C & 0\\
D & E
\end{pmatrix} : \begin{bmatrix} \h_1 \\ \h_1' \end{bmatrix} \to \begin{bmatrix} \h_0 \\ \h_0' \end{bmatrix}
\end{equation}
with some appropriate operators $D,E$.

Let $S$ on $\ka=\ka_0 \oplus \ka_1$ be the operator defined as
\begin{equation}\label{eq37}
S=
\begin{pmatrix}
V_0 & \widetilde{C}\\
0 & V_1
\end{pmatrix}.
\end{equation}
We write $S=V+Q$, where $V=V_0 \oplus V_1$ is an isometry and $Q$ a nilpotent operator of order $2$, that is satisfying $Q^2=0$. Since $\widetilde{C}V_1=V_0\widetilde{C}$, we have $VQ=QV$. Thus, by \cite[Theorem 2.2]{BMN}, $S$ is a 3-isometry. To see that $S$ is a lifting of $T$ we represent $S$ with respect to the decompositions $\ka =\h'_0 \oplus \h_0 \oplus \h_1 \oplus \h'_1$, $\ka=\h'_0\oplus \h'_1 \oplus \h_0 \oplus \h_1$ and $\ka=(\ka \ominus \h) \oplus \h$ using \eqref{eq35}, \eqref{eq36} and \eqref{eq37}. We obtain
$$
S=
\begin{pmatrix}
S_0 & D_0 & D & E\\
0 & C_0 & C & 0\\
0 & 0 & C_1 & 0\\
0 & 0 & D_1 & S_1
\end{pmatrix}
=
\begin{pmatrix}
S_0 & E & D_0 & D\\
0 & S_1 & 0 & D_1\\
0 & 0 & C_0 & C\\
0 & 0 & 0 & C_1
\end{pmatrix}
=
\begin{pmatrix}
T & 0\\
\widetilde{D} & W
\end{pmatrix}
$$
for some operators $W$ and $\widetilde D$.
\end{proof}

Recall that a {\it Foguel-Hankel operator} is an operator $T$ of the form \eqref{eq33}, where $C_1=S_+$ is a shift operator on a Hilbert space $\h$, $C_0=S_+^*$ and $C$ is a Hankel operator satisfying $CS_+=S_+^*C$. So for such operators we obtain the following consequence.

\begin{corollary}\label{co38}
Every Foguel-Hankel operator has a $3$-isometric lifting.
\end{corollary}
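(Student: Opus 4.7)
The plan is to apply Theorem~\ref{thm37} directly, since the definition of a Foguel-Hankel operator is essentially the list of hypotheses of that theorem. By definition, such an operator $T$ on $\h = \h_0 \oplus \h_1$ has the block form \eqref{eq33} with $C_1 = S_+$, $C_0 = S_+^*$, and $C \in \mathcal{B}(\h_1,\h_0)$ a Hankel operator obeying $CS_+ = S_+^* C$.

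First I would verify the contractivity requirement on the diagonal blocks: both the forward shift $S_+$ and its adjoint $S_+^*$ have norm one, so $C_0$ and $C_1$ are contractions on the appropriate Hilbert spaces.

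Next I would check the intertwining relation $CC_1 = C_0 C$ of Theorem~\ref{thm37}. Substituting $C_0 = S_+^*$ and $C_1 = S_+$ turns this identity into $CS_+ = S_+^* C$, which is exactly the Hankel condition built into the definition of a Foguel-Hankel operator.

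With both the contractivity and intertwining hypotheses established, Theorem~\ref{thm37} applies and produces a $3$-isometric lifting $S$ of $T$ on a suitable enlargement $\ka \supset \h$, which is precisely what the corollary asserts. There is no serious obstacle here: the corollary is a direct specialization of the preceding theorem, and the only thing to do in the proof is to record the identification of the data.
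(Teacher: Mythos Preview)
Your proposal is correct and matches the paper's approach exactly: the corollary is stated as an immediate consequence of Theorem~\ref{thm37}, and the only content is the observation that the Foguel--Hankel data $C_0=S_+^*$, $C_1=S_+$ are contractions and that the Hankel relation $CS_+=S_+^*C$ is precisely the intertwining hypothesis $CC_1=C_0C$.
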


\begin{remark}\label{re39}
\rm 
Let us remark that one can chose $T$ of the form \eqref{eq33} with some suitable operators $C_0$, $C_1$ and $C$, such that $\|T^n\|\sim Kn$ for some constant $K>0$ and $n\ge 1$. In this case $T$ has a $3$-isometric lifting but cannot have a $2$-isometric lifting. Indeed, the existence of a $2$-isometric lifting would necessarily imply $\|T^n\|\le K_0 n^{1/2}$ for $n\ge 1$ and some $K_0>0$. 
%Therefore, the $3$ in the existence of $3$-isometric liftings for the operators $T$ of the form \eqref{eq33} is optimal. 
\end{remark}

\begin{corollary}\label{co310}
Let $T,C \in \B$ be such that $T$ is a contraction and $C$ commutes with $T$. Then the operator $\widetilde{T}\in \mathcal{B}(\h\oplus \h)$ with the block matrix
\begin{equation}\label{eq38}
\widetilde{T}=
\begin{pmatrix}
T & C\\
0 & T
\end{pmatrix}
\end{equation}
has a 3-isometric lifting $S$ on some space $\ka \supset \h \oplus \h$. 
\end{corollary}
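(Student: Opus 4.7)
The plan is to derive this corollary as an immediate specialization of Theorem \ref{thm37}. I would decompose the underlying space as $\h \oplus \h = \h_0 \oplus \h_1$ with $\h_0 = \h_1 = \h$, and identify the block entries of $\widetilde{T}$ with those appearing in \eqref{eq33} by setting $C_0 = C_1 = T$ while taking the off-diagonal block to be the given operator $C$. The hypothesis that $T$ is a contraction immediately yields that $C_0$ and $C_1$ are contractions, so the first hypothesis of Theorem \ref{thm37} is satisfied.

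The remaining condition to verify is the intertwining relation $C C_1 = C_0 C$. With our choice $C_0 = C_1 = T$, this reduces to $CT = TC$, which is precisely the commutation assumption stated in the corollary. All hypotheses of Theorem \ref{thm37} are therefore in place, and we conclude that $\widetilde{T}$ has a $3$-isometric lifting $S$ on some space $\ka \supset \h \oplus \h$.

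I do not anticipate any substantive obstacle, since the corollary is essentially a direct reformulation of Theorem \ref{thm37} in the symmetric case where the two diagonal contractions coincide and the abstract intertwining condition becomes ordinary commutativity. If desired, one may further record the explicit shape of the lifting produced by the proof of Theorem \ref{thm37}: it has the form $S = V + Q$, where $V$ is the orthogonal sum of two copies of the minimal isometric lifting of $T$ and $Q$ is a nilpotent operator of order two whose nonzero block is a commutant lifting of $C$ intertwining the two copies of that isometric lifting.
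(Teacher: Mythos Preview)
Your proposal is correct and follows exactly the same approach as the paper: the paper's own proof is a single line applying Theorem~\ref{thm37} with $C_0=C_1=T$, and you have simply spelled out the routine verification of its hypotheses. The additional remark about the explicit shape $S=V+Q$ of the lifting is accurate and consistent with the construction in the proof of Theorem~\ref{thm37}.
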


\begin{proof}
The proof follows by applying Theorem \ref{thm37} to $\widetilde{T}$ with $C_0=C_1=T$.
\end{proof}

\begin{remark}\label{re311}
\rm
In operator ergodic theory one considers $\widetilde{T}$ as in \eqref{eq38} with $C=I-T$. It follows that $\frac{\|\widetilde{T}^n\|}{n}\to 0$ ($n\to \infty$) if and only if $\|T^{n+1}-T^n\|\to 0$ ($n\to \infty$), or equivalently, the intersection of the spectrum $\sigma(T)$ of $T$ with the unit circle $\mathbb{T}$ is void, or the singleton $\{1\}$ (see \cite{TZ}).
%at most 1 is in the spectrum $\sigma(T)$ of $T$ on the unit circle $\mathbb{T}$ (see \cite{TZ}). 
Hence, for every contraction $T$ on $\h$ with $\sigma(T) \cap \mathbb{T} \neq \emptyset$ and $\sigma(T) \cap \mathbb{T} \neq \{1\}$, the operator $\widetilde{T}$ in \eqref{eq38} with $C=I-T$ cannot have 2-isometric liftings. Indeed, otherwise $\widetilde{T}$ would satisfy \eqref{eq32} with $m=1$ and thus $\frac{\|\widetilde{T}^n\|}{n} \to 0$, contradicting the choice of $T$. Therefore the least integer $m$ for which $\widetilde{T}$ possess an $m$-isometric lifting is $m=3$.
%In this case the order $3$ (from the $3$-isometric liftings for the operator $\widetilde{T}$) is optimal.
Notice that such operators $\widetilde{T}$ (with $C=I-T$) are not necessarily power bounded.  
\end{remark}

\begin{remark}\label{re312}
\rm
Another particular case of Corollary \ref{co310} is when $C=T$ in \eqref{eq38}. Then a 3-isometric lifting of the form \eqref{eq37} for $\widetilde{T}$ is 
$$
S=
\begin{pmatrix}
V & V\\
0 & V
\end{pmatrix}
\quad {\rm on}\quad \ka \oplus \ka,
$$
where $V$ on $\ka$ is the minimal isometric lifting of $T$. Thus, if $U$ on $\widetilde{\ka}$ is the minimal unitary extension of $V$, then the operator 
$$
\widetilde{S}=
\begin{pmatrix}
U & U\\
0 & U
\end{pmatrix}
\quad {\rm on}\quad \widetilde{\ka} \oplus \widetilde{\ka},
$$
is an extension of $S$, hence $\widetilde{S}$ is a power dilation for $\widetilde{T}$. 
Operators with $3$-isometric extensions of the form $\widetilde{S}$ (with $U$ unitary) were also studied in \cite{McCR}.
\end{remark}

%\medskip

\bigskip

\bibliographystyle{amsalpha}

\end{document}